\newtheorem{theorem}{Theorem}[section]
\newtheorem{lemma}[theorem]{Lemma}
\newtheorem{corollary}[theorem]{Corollary}
\theoremstyle{definition}
\theoremstyle{remark}
\begin{document}

\title[Estimates of operator convex and operator monotone functions]{Estimates of operator convex and operator monotone functions on bounded intervals}

\author[M. Fujii, M.S. Moslehian, H. Najafi, R. Nakamoto]{Masatoshi Fujii$^1$, Mohammad Sal Moslehian$^{2}$, Hamed Najafi$^{2}$ and Ritsuo Nakamoto$^{3}$}

\address{$^{1}$ Department of Mathematics, Osaka Kyoiku University, Kashiwara, Osaka 582-8582, Japan.}
\email{mfujii@cc.osaka-kyoiku.ac.jp}

\address{$^{2}$ Department of Pure Mathematics, Center of Excellence in Analysis on Algebraic Structures (CEAAS), Ferdowsi University of Mashhad, P.O.Box 1159, Mashhad 91775, Iran }
\email{ Moslehian@ferdowsi.um.ac.ir, moslehian@member.ams.org (Moslehian), hamednajafi@gmail.com  (Najafi)}

\address{$^{3}$ 3-4-13, Daihara-cho, Hitachi 316-0021, Japan.}
\email{r-naka@net1.jway.ne.jp}

\subjclass[2010]{Primary 47A63; Secondary 47B10 and 47A30 }

\keywords{ L\"owner--Heinz inequality, Furuta inequality and operator monotone function  }

\begin{abstract}
Recently the behavior of operator monotone functions on unbounded intervals with respect to the relation of strictly positivity has been investigated. In this paper we deeply study such behavior not only for operator monotone functions but also for operator convex functions on bounded intervals. More precisely, we prove that if $f$ is a nonlinear operator convex function on a bounded interval $(a,b)$ and $A, B$ are bounded linear operators acting on a Hilbert space with spectra in $(a,b)$ and $A-B$ is invertible, then $sf(A)+(1-s)f(B)>f(sA+(1-s)B)$. A short proof for a  similar known result concerning a nonconstant operator monotone function on $[0,\infty)$ is presented. Another purpose is to find a lower bound for $f(A)-f(B)$, where $f$ is a nonconstant operator monotone function, by using a key lemma. We also give an estimation of the Furuta inequality, which is an excellent extension of the L\"owner--Heinz inequality.
\end{abstract}

 \maketitle


\section{Introduction}

Let $(\mathscr{H}, \langle \cdot,\cdot\rangle)$ be a complex Hilbert
space and $\mathbb{B}(\mathscr{H})$ denote the algebra of all
bounded linear operators on $\mathscr{H}$ equipped with the operator
norm $\|\cdot\|$. An operator $ A\in \mathbb{B}(\mathscr{H})$ is called {\it positive} if $\langle Ax, x\rangle \geq 0$ holds for every $x\in \mathscr{H}$ and then we write $A\geq 0$. For self-adjoint operators $A,B \in \mathbb{B}(\mathscr{H})$, we say $A\leq B$ if $B-A\geq0$. Further, we write $A > B$ if $A\geq B$ and $A-B$ is invertible. When $A>0$, we call $A$ strictly positive.

Let $f$ be a real-valued function defined on an interval $J$. If for any self-adjoint operators $A, B \in \mathbb{B}(\mathscr{H})$ with
spectra in $J$,
\begin{itemize}
\item   $A\leq B$ implies $f(A)\leq f(B)$, then $f$ is said to be {\it operator monotone};\\
\item   $f(\lambda A + (1-\lambda)B) \leq \lambda f(A)+(1-\lambda)f(B)$ for all $\lambda \in [0,1]$, then $f$ is said to be {\it operator convex}.
\end{itemize}
Let us state our main terminology.\\

{\bf Definition.} A continuous real valued function $f$ defined on an interval $J=(a,b)$ is called
\begin{itemize}
\item [(i)] {\it strictly operator monotone} if  $A < B$ implies $f(A)< f(B)$ for all self-adjoint operators $A, B \in \mathbb{B}(\mathscr{H})$ with spectra in $J$.
\item [(ii)] {\it strictly operator convex} if $f(\lambda A+ (1-\lambda)B)< \lambda f(A)+ (1-\lambda) f(B)$
 for all $0 < \lambda < 1$ and all selfadjoint operators $A, B $ with spectra in $J$ such that $A-B$ is invertible.
\end{itemize}
Kwong \cite{KWO} showed that $f(t)=t^r$ is strictly operator monotone whenever $0<r\leq 1$. In addition, Uchiyama\cite{U2000} studied strictly operator convex functions.

Recall that if $f$ is an operator monotone function on $[0,\infty)$, then $f$ can be represented as
$$f(t)=f(0)+\beta t + \int_0^\infty \frac{\lambda t}{\lambda+t} d\mu(\lambda)\,,$$
where $\beta \geq 0$ and $\mu$ is a positive measure on $[0,\infty)$ and if $f$ is an operator convex function on $[0,\infty)$, then $f$ can be represented as
$$f(t)=f(0)+\beta t +\gamma t^2 +\int_0^\infty \frac{\lambda t^2}{\lambda+t} \ d\mu(\lambda)\,,$$
in which $\gamma \geq 0$, $\beta= f_+^{'}(0)=\lim_{t\to
0^+}\frac{f(t)-f(0)}{t}$ and $\mu$ is a positive measure on
$[0,\infty)$. The integral representations of operator convex and operator monotone functions on bounded intervals are different as we will see later.

The L\"owner--Heinz inequality is one of the most important facts in the theory of operator inequalities. It says that the function $t^p$ is operator monotone for $p \in [0,1]$; cf. \cite{HEI}. Recently the behavior of operator monotone functions on unbounded intervals with respect to the relation of strictly positivity has been investigated. In \cite{MN}, an estimation of the L\"owner--Heinz inequality was proposed as follows.

{\bf Theorem A.} {\it
If $ A > B \geq 0$ and $0 < r \le 1$, then $ A^r - B^r \geq \|A\|^r - (\|A\| - m)^r > 0$, and $ \log A - \log B \geq \log \|A\| - \log (\|A\| - m) > 0$,
where $m = \|(A-B)^{-1}\|^{-1}$.}

\vspace{2mm}

Very recently, the following generalization of Theorem A is given in \cite{FKN}, see also \cite{BJMA}.

{\bf Theorem B.} \ {\it If $ A > B \geq 0$ and $f$ is a non-constant operator monotone function on $[0, \infty)$, then $ f(A) - f(B) \geq f(\|B\|+ m) - f(\|B\|) > 0$, where $m= \|(A-B)^{-1}\|^{-1}$.}

\vspace{2mm}

As a consequence, we have the following improvement of Theorem A.

\vspace{2mm}

{\bf Theorem C.} \ {\it
If $ A > B \geq 0$ and $0 < r \le 1$, then
$$ A^r - B^r \geq (\|B\|+ m)^r - \|B\|^r > 0 $$
and $ \log A - \log B \geq \log (\|B\|+ m) - \log \|B\| > 0$,
where $m= \|(A-B)^{-1}\|^{-1}$.
}

The first aim of this paper is to prove that a nonlinear operator convex function on a bounded interval is strictly operator convex. The second one is to give a precise consideration of Theorem B for operator monotone functions on a finite interval. The third purpose is to extend Theorem C to the Furuta inequality. We recall the unforgettable fact that the Furuta inequality is a beautiful extension of the L\"owner--Heinz inequality.


\section{Strictly operator convex functions}

In this section we treat the behavior of operator convex functions on bounded intervals with respect to the relation of strictly positivity. To this end we need some lemmas.

\begin{lemma}\label{1-1}  
 The function $f(x)=\frac 1x$ is strictly operator convex on $(0, \infty)$, that is, if $A, B$ are strictly positive operators such that $A-B$ is invertible, then for each $0 < s < 1$,
 $$ s A^{-1} + (1-s) B^{-1} > (s A + (1-s)B)^{-1}.$$
\end{lemma}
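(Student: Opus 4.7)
The plan is to combine the classical (non-strict) operator convexity of $x\mapsto 1/x$ on $(0,\infty)$ with an explicit algebraic identity for the difference that makes its invertibility manifest. The classical operator convexity (which is standard and not part of the paper's contribution) gives
\[
sA^{-1}+(1-s)B^{-1}\ \geq\ (sA+(1-s)B)^{-1}
\]
for all $A,B>0$ and $s\in[0,1]$, so it suffices to show that, under the extra hypothesis that $A-B$ is invertible and $0<s<1$, the difference of the two sides is invertible. Combined with self-adjointness and non-negativity this will promote $\geq$ to $>$.

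To produce such an identity, set $C=sA+(1-s)B$ and apply the elementary resolvent formula $X^{-1}-Y^{-1}=X^{-1}(Y-X)Y^{-1}$ for invertible self-adjoint $X,Y$. Since $C-A=-(1-s)(A-B)$ and $C-B=s(A-B)$, one gets
\[
A^{-1}-C^{-1}=-(1-s)\,A^{-1}(A-B)C^{-1},\qquad B^{-1}-C^{-1}=s\,B^{-1}(A-B)C^{-1}.
\]
Forming $s(A^{-1}-C^{-1})+(1-s)(B^{-1}-C^{-1})$ and using $B^{-1}-A^{-1}=B^{-1}(A-B)A^{-1}$ yields
\[
sA^{-1}+(1-s)B^{-1}-C^{-1}\ =\ s(1-s)\,B^{-1}(A-B)A^{-1}(A-B)C^{-1}.
\]

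Since $A$, $B$, $C$ are strictly positive and hence invertible, and since $A-B$ is invertible by hypothesis, the right-hand side is a product of five invertible operators, so it is invertible. The left-hand side is self-adjoint (difference of self-adjoints) and non-negative by the first step, and now also invertible, hence strictly positive in the sense of the paper.

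I do not foresee any real obstacle: the routine part is the sign bookkeeping in the derivation of the identity, and the only subtle observation is that the product on the right-hand side is not manifestly self-adjoint but must coincide with the manifestly self-adjoint left-hand side, so no symmetrization is needed. In short, the algebraic identity is doing all the work, with classical operator convexity supplying positivity essentially for free.
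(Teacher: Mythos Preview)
Your proof is correct, but the route differs from the paper's. The paper conjugates the difference $K = sA^{-1}+(1-s)B^{-1}-(sA+(1-s)B)^{-1}$ by $A^{1/2}$ to obtain $A^{1/2}KA^{1/2}=f(H)$, where $H=A^{1/2}B^{-1}A^{1/2}$ and $f(x)=s+(1-s)x-(s+(1-s)x^{-1})^{-1}$; one then verifies the scalar fact that $f(x)>0$ for all $x>0$ with $x\neq 1$, and observes that invertibility of $A-B$ forces $1\notin\mathrm{sp}(H)$, whence $f(H)>0$. This reduction to the functional calculus of a single operator is self-contained: it yields the non-strict inequality (since $f\geq 0$) and the strict one simultaneously. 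Your argument instead outsources the non-strict positivity to the classical operator convexity of $x\mapsto 1/x$ and then isolates invertibility through the resolvent-type identity
\[
sA^{-1}+(1-s)B^{-1}-C^{-1}=s(1-s)\,B^{-1}(A-B)A^{-1}(A-B)C^{-1},
\]
displaying the difference as a product of invertibles. Your approach is a bit less self-contained but has the virtue of making the role of the hypothesis ``$A-B$ invertible'' completely explicit in the formula; the paper's approach packages positivity and strict positivity into one scalar computation.
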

\begin{proof}
Let $s \in (0,1)$. Put $H= A^{\frac{1}{2}} B^{-1} A^{\frac{1}{2}}$ and
$K=s A^{-1} + (1-s) B^{-1} - (s A + (1-s)B)^{-1}$. Then
$$
A^{\frac{1}{2}}KA^{\frac{1}{2}} = s + (1-s)H - (s + (1-s)H^{-1})^{-1}=f(H),
$$
where  $f(x)= s + (1-s)x - (s + (1-s)x^{-1})^{-1}$ for $x>0$.
It is clear that $f(x)> 0$ for $x>0$ except $x=1$.

Now, since $A-B$ is invertible, so is $1-H$ and thus $1$ does not belong to the spectrum of $H$.  Hence we have $f(H) > 0$, so $K > 0$, which is the desired inequality.
\end{proof}

\begin{lemma}\label{1-2}   
For each $\lambda$ with $|\lambda| \leq 1$, the function $f_{\lambda} (x) = \frac{x^2}{1- \lambda x}$ is strictly operator convex on $J=(-1,1)$.

\end{lemma}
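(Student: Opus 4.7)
The plan is to reduce this to Lemma \ref{1-1} via an algebraic decomposition, treating $\lambda=0$ as a separate easy case.

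First I would handle $\lambda=0$ directly: $f_0(x)=x^2$, and the straightforward calculation
\[
sA^{2}+(1-s)B^{2}-\bigl(sA+(1-s)B\bigr)^{2}=s(1-s)(A-B)^{2}
\]
is strictly positive whenever $A-B$ is invertible (and $0<s<1$), since $(A-B)^2$ is then strictly positive.

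For $\lambda\neq 0$ the key is the partial-fraction identity
\[
\frac{x^{2}}{1-\lambda x}=\frac{1}{\lambda^{2}}\left(\frac{1}{1-\lambda x}-1-\lambda x\right),
\]
which I would first verify by direct algebra. Splitting $f_\lambda$ this way is useful because the linear summand $-1-\lambda x$ contributes an exact equality in any convex combination and hence drops out of the difference $sf_\lambda(A)+(1-s)f_\lambda(B)-f_\lambda(sA+(1-s)B)$, leaving only the contribution from $h(x)=(1-\lambda x)^{-1}$ scaled by the positive factor $1/\lambda^2$. Thus strict operator convexity of $f_\lambda$ on $(-1,1)$ is equivalent to strict operator convexity of $h$ on $(-1,1)$.

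To handle $h$, let $A,B$ be self-adjoint with spectra in $(-1,1)$ and $A-B$ invertible, and set $A'=1-\lambda A$, $B'=1-\lambda B$. Since $|\lambda|\le 1$, the spectra of $A',B'$ lie in $(1-|\lambda|,1+|\lambda|)\subset(0,2)$, so $A'$ and $B'$ are strictly positive. Moreover $A'-B'=-\lambda(A-B)$ is invertible because $\lambda\neq 0$. Applying Lemma \ref{1-1} to $A'$ and $B'$ gives
\[
s(A')^{-1}+(1-s)(B')^{-1}>\bigl(sA'+(1-s)B'\bigr)^{-1},
\]
and since $sA'+(1-s)B'=1-\lambda(sA+(1-s)B)$, this is exactly $sh(A)+(1-s)h(B)>h(sA+(1-s)B)$. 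Multiplying by $\lambda^{-2}>0$ and adding the (vanishing) linear contribution yields the desired strict inequality for $f_\lambda$.

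The main obstacle is really just spotting the decomposition; once it is in hand, everything reduces to verifying that the affine change of variables $x\mapsto 1-\lambda x$ sends $(-1,1)$ into the domain of Lemma \ref{1-1} and preserves invertibility of differences. The case $\lambda=0$ must not be forgotten because the decomposition degenerates (division by $\lambda^2$), but it is immediate. No additional machinery beyond Lemma \ref{1-1} is needed.
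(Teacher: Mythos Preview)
Your proposal is correct and follows essentially the same route as the paper: the partial-fraction decomposition you use is exactly the identity $f_\lambda(x)=\frac{-x}{\lambda}+\frac{-1}{\lambda^2}+\frac{1}{\lambda^2(1-\lambda x)}$ employed there, and both arguments then apply Lemma~\ref{1-1} to $1-\lambda A$ and $1-\lambda B$, with the $\lambda=0$ case handled by the same direct computation of $s(1-s)(A-B)^2$.
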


\begin{proof}
We show that if $A, B$ are selfadjoint operators with spectra in $J$ such that $A-B$ is invertible, then
$$s f_{\lambda} (A) + (1-s) f_{\lambda} (B) > f_{\lambda}(s A + (1-s)B)$$
holds for $0<s<1$.
Fix $\lambda \not= 0$.  Since $f_{\lambda} (x) = \frac{-x}{\lambda}+ \frac{-1}{\lambda^2} + \frac{1}{\lambda^2(1- \lambda x)}$, we have
\begin{align*}
&\lambda^2(s f_{\lambda} (A) + (1-s) f_{\lambda} (B) - f_{\lambda}(s A + (1-s)B)) \\
& = -s \lambda A -s + s(1- \lambda A)^{-1}
 -(1-s)\lambda B -(1-s) + (1-s)(1- \lambda B)^{-1} \\
&\quad  + \lambda (s A + (1-s)B)+ 1 - (1- \lambda (s A + (1-s)B))^{-1} \\
& = s(1- \lambda A)^{-1} + (1-s)(1- \lambda B)^{-1} - (1- \lambda (s A + (1-s)B))^{-1}.
\end{align*}
Applying Lemma \ref{1-1} for $ 1- \lambda A $ and $ 1- \lambda B $, we reach the conclusion.

Incidentally, for the case $\lambda = 0$, i.e., $f_0(x)=x^2$, we have
\begin{align*}
&s A^{2} + (1-s) B^{2} - (s A + (1-s)B)^{2} \\
&= s A^{2} + (1-s) B^{2} - s^2 A^{2} - (1-s)^2 B^{2} - s(1-s) (AB + AB)\\
&= s(1-s) (A-B)^2 >0.
\end{align*}
\end{proof}

\begin{theorem}\label{1-0}  
 Any nonlinear operator convex function on a finite interval $(a,b)$  is strictly operator convex on $(a,b)$.
\end{theorem}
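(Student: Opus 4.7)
The plan is to reduce to the normalized interval $(-1,1)$ and then invoke the integral representation of operator convex functions on a bounded interval, using Lemma~\ref{1-2} as the atomic strict convexity input.

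First, I would use the affine bijection $\varphi(t) = (2t-a-b)/(b-a)$ from $(a,b)$ onto $(-1,1)$. Since $\varphi$ is affine, $f$ is operator convex (respectively nonlinear, respectively strictly operator convex) on $(a,b)$ if and only if $g := f\circ\varphi^{-1}$ is on $(-1,1)$, and invertibility of $A-B$ is preserved by the corresponding spectral mapping. So it suffices to treat $J=(-1,1)$.

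Next, I would appeal to the Bendat--Sherman--type integral representation
\[
f(x) \;=\; \alpha + \beta x + \int_{[-1,1]} \frac{x^2}{1 - \lambda x}\,d\mu(\lambda),
\]
valid on $(-1,1)$, with $\alpha,\beta\in\mathbb{R}$ and $\mu$ a positive finite Borel measure on $[-1,1]$; nonlinearity of $f$ is then equivalent to $\mu\neq 0$. Writing
\[
D_h(A,B) := s\,h(A) + (1-s)h(B) - h(sA+(1-s)B),
\]
the affine part contributes nothing, while for each $\lambda\in[-1,1]$ Lemma~\ref{1-2} gives $D_{f_\lambda}(A,B) > 0$ whenever $A-B$ is invertible. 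Hence
\[
D_f(A,B) \;=\; \int_{[-1,1]} D_{f_\lambda}(A,B)\,d\mu(\lambda) \;\geq\; 0.
\]

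The final step upgrades this inequality to strict positivity. If some unit vector $x$ satisfied $\langle D_f(A,B)x,x\rangle = 0$, then nonnegativity of the integrand would force $\langle D_{f_\lambda}(A,B)x,x\rangle = 0$ for $\mu$-almost every $\lambda$; but Lemma~\ref{1-2} guarantees $\langle D_{f_\lambda}(A,B)x,x\rangle > 0$ for \emph{every} $\lambda\in[-1,1]$, so the exceptional set would be all of $[-1,1]$, forcing $\mu=0$ and contradicting nonlinearity. The main obstacle I anticipate is deploying the correct integral representation on a bounded interval (which the introduction flags as ``different'' from the unbounded case) and verifying that the integrand $x^{2}/(1-\lambda x)$ applied to $A, B, sA+(1-s)B$ is a well-defined operator-valued function of $\lambda$; once that is in hand, the strict positivity argument runs cleanly off Lemma~\ref{1-2}.
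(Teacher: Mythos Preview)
Your approach is exactly the paper's: reduce to $(-1,1)$ by an affine change of variable, invoke the Hansen--Pedersen integral representation
\[
f(x)=f(0)+\alpha x+\int_{[-1,1]}\frac{x^2}{1-\lambda x}\,d\mu(\lambda),
\]
note that nonlinearity forces $\mu\neq 0$, and then feed in Lemma~\ref{1-2}. The paper's write-up is in fact terser than yours; it simply says ``By Lemma~\ref{1-2} we conclude\ldots'' without spelling out the passage from the $\lambda$-wise strict inequality to the integrated one.

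There is, however, a genuine small gap in your final step. Your contradiction argument only establishes that $\langle D_f(A,B)x,x\rangle>0$ for every unit vector $x$. In infinite dimensions this is strictly weaker than $D_f(A,B)>0$ in the sense used throughout the paper (positive \emph{and invertible}); a positive operator can have $0$ in its spectrum without $0$ being an eigenvalue. The easy fix is a compactness argument: since the spectra of $A$, $B$, and $sA+(1-s)B$ are compact subsets of $(-1,1)$, the map $\lambda\mapsto D_{f_\lambda}(A,B)$ is norm-continuous on $[-1,1]$, and Lemma~\ref{1-2} gives $D_{f_\lambda}(A,B)\ge c_\lambda I$ with $c_\lambda>0$ for each $\lambda$. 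Continuity of $\lambda\mapsto c_\lambda=\|D_{f_\lambda}(A,B)^{-1}\|^{-1}$ on the compact interval yields $c:=\min_{\lambda}c_\lambda>0$, whence
\[
D_f(A,B)\;\ge\; c\,\mu([-1,1])\,I\;>\;0.
\]
With this adjustment your proof is complete and coincides with the paper's intended argument.
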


\begin{proof}
We may assume that $(a,b)=(-1,1)$.
 It is known that any operator convex function $f$ on $(-1,1)$ can be
represented as
\begin{eqnarray}\label{7-3}
f(x)=f(0)+\alpha x + \int_{-1}^{1}\frac{x^2}{1-\lambda x} d\mu(\lambda),
\end{eqnarray}
where $\alpha \in \mathbb{R}$ and $\mu$ is a positive measure on $[-1,1]$, \cite[Theorem 4.5]{HansenPedersen1982}. Since $f$ is nonlinear, we infer that $\mu \neq 0$.
By Lemma \ref{1-2} we conclude that
 $$ s f(A)+ (1-s) f(B) > f(sA+ (1-s)B) $$
for each $0 < s < 1$ and selfadjoint operators $A, B$ with spectra in $(-1,1)$ such that $A-B$ is invertible.
\end{proof}

\section{Estimates of Operator Monotone Functions}

We start this section with the following lemma. The first part borrow from \cite{MN} and the second is another variant of it.

\begin{lemma}\label{ess}
Let $A>B>0$ and $m=\|(A-B)^{-1}\|^{-1}$. Then
\begin{itemize}
\item[(i)] $B^{-1} - A^{-1} \geq \frac 1{\|A\|-m} - \frac 1{\|A\|}$;
\item[(ii)] $B^{-1}-A^{-1}\geq {m\over{(\|B\|+m)\|B\|}}$.
\end{itemize}
\end{lemma}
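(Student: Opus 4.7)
The plan is to reduce both inequalities to scalar monotonicity via the operator-monotonicity of $t\mapsto -1/t$ and the spectral theorem. The starting observation in both cases is that $m=\|(A-B)^{-1}\|^{-1}$ is exactly the bottom of the spectrum of $A-B$, so the hypothesis $A>B>0$ yields the operator inequality $A-B\ge mI$.

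For part (i), I would first rewrite $A-B\ge mI$ as $B\le A-mI$. Since $t\mapsto t^{-1}$ reverses the order on positive operators, this gives $B^{-1}\ge (A-mI)^{-1}$, and therefore
\[
B^{-1}-A^{-1}\ \ge\ (A-mI)^{-1}-A^{-1}.
\]
Now the right-hand side is a scalar-valued continuous function of the single operator $A$ alone, namely $g(A)$ with $g(t)=\tfrac{1}{t-m}-\tfrac{1}{t}=\tfrac{m}{t(t-m)}$ on $(m,\infty)$. Since $A\ge B+mI\ge mI$ and $A\le\|A\|I$, the spectrum of $A$ lies in $(m,\|A\|]$, and $g$ is strictly decreasing on $(m,\infty)$. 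The spectral theorem then yields $g(A)\ge g(\|A\|)I=\bigl(\tfrac{1}{\|A\|-m}-\tfrac{1}{\|A\|}\bigr)I$, giving the desired bound.

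For part (ii), I would proceed symmetrically, only this time using $A\ge B+mI$ directly. Operator-monotonicity of $-1/t$ gives $A^{-1}\le (B+mI)^{-1}$, hence
\[
B^{-1}-A^{-1}\ \ge\ B^{-1}-(B+mI)^{-1}=h(B),\qquad h(t)=\tfrac{1}{t}-\tfrac{1}{t+m}=\tfrac{m}{t(t+m)}.
\]
The function $h$ is strictly decreasing on $(0,\infty)$, and the spectrum of $B$ lies in $(0,\|B\|]$, so the spectral theorem delivers $h(B)\ge h(\|B\|)I=\tfrac{m}{\|B\|(\|B\|+m)}I$, which is (ii).

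There is no real obstacle here; the only subtle point to watch is the direction of the inequalities when inverting, and the recognition that the worst-case (smallest) value of $B^{-1}-A^{-1}$ is achieved by replacing $B$ with $A-mI$ (for part (i)) or $A$ with $B+mI$ (for part (ii)), after which scalar monotonicity of $g$ and $h$ on the spectra of $A$ and $B$ respectively finishes the job.
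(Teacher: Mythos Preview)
Your proof is correct and follows essentially the same approach as the paper: for (ii) the paper argues exactly as you do, writing $B^{-1}-A^{-1}\ge B^{-1}-(B+m)^{-1}=mB^{-1}(B+m)^{-1}\ge m/(\|B\|(\|B\|+m))$, and for (i) the paper simply cites the reference [MN], where the argument is the symmetric one you give via $(A-mI)^{-1}-A^{-1}$. Your handling of the one delicate point---that $A-mI\ge B>0$ guarantees invertibility of $A-mI$ so that $g(A)$ is well defined---is also fine.
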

\begin{proof} \ (i) See Lemma 2.1 of \cite{MN}.\\
(ii) Because of $A-B \geq m$ we have
$$ B^{-1}-A^{-1} \geq B^{-1}-(B+m)^{-1}=mB^{-1}(B+m)^{-1} \geq \frac m{\|B\|(\|B\|+m)}.
$$
\end{proof}

Now we note that  $m=\|(A-B)^{-1}\|^{-1}$ is the maximum among $c \geq 0$ such that $A-B \geq c$.

Let us give an alternative proof of Theorem B by using integral representation of operator monotone functions on $[0,\infty)$.

\begin{proof}[Proof of Theorem B] \
Note that $f$ admits the the following integral representation:
$$ f(t)= a+bt+\int_{-\infty}^0 \frac {1+ts}{s-t} dm(s)
= a+bt+\int_{-\infty}^0 (-s-\frac {1+ s^2}{t-s}) dm(s)
 $$
where $b \geq 0$ and $m(s)$ is a positive measure. Hence it follows from Lemma 3.1 
 (ii) that
\begin{align*}
f(A)-f(B)&= b(A-B)+\int_{-\infty}^0 (1+s^2)((B-s)^{-1}-(A-s)^{-1}) dm(s) \\
   &\geq bm+\int_{-\infty}^0 (1+s^2)\left(\frac 1{\|B\|-s}-\frac 1{\|B\|-s+m}\right) dm(s) \\
   &= f(\|B\|+m)-f(\|B\|) \ (>0).
   \end{align*}
\end{proof}

Finally we propose an explicit lower bound of the difference $f(A) - f(B)$ for operator monotone functions on a finite interval $(a,b)$ when $A>B$; see \cite[Proposition 2.2]{MN}. Let us $m_X = \min {\rm sp}(X)$ and $M_X = \max {\rm sp}(X)$ for a self-adjoint operator $X$, where ${\rm sp}(X)$ denotes the spectrum of $X$. Note if $-1 < \lambda <0$ and $-1 \leq X \leq 1$, then $1-\lambda X\geq 0$. Hence
$$ \|1-\lambda X\| = \max \{1-\lambda t: t \in {\rm sp}(X)\}=1-\lambda \max \{t: t \in {\rm sp}(X)\}= 1-\lambda M_X.$$
Similarly if $0 < \lambda <1$ and $-1 \leq X \leq 1$, then $\|1-\lambda X\| = 1-\lambda m_X$.

To achieve our main result we need the following key lemma. For $\lambda \in (-1,1)$, set
$$f_{\lambda}(t) := \frac t{1-\lambda t},\qquad t\in (-1,1)\,.$$

\begin{lemma} \
If $-1 < B < A < 1$ and $A-B \ge m >0$, then
$$
f_\lambda(A) - f_\lambda(B) \ge
\begin{cases}
f_\lambda(M_B+ m) - f_\lambda(M_B) \quad \text{for} \ -1<\lambda \le 0 \\
f_\lambda(m_A) - f_\lambda(m_A-m) \ \quad \text{for} \ \ 0 < \lambda < 1.
\end{cases}
$$
\end{lemma}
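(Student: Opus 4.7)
The plan is to reduce the statement to the resolvent estimate in Lemma~\ref{ess}(ii) via the partial-fraction identity
\[
f_{\lambda}(t) = -\frac{1}{\lambda} + \frac{1}{\lambda}\cdot\frac{1}{1-\lambda t} \qquad (\lambda\neq 0),
\]
which, upon the functional calculus, yields the operator identity
\[
f_{\lambda}(A) - f_{\lambda}(B) \;=\; \frac{1}{\lambda}\bigl[(1-\lambda A)^{-1} - (1-\lambda B)^{-1}\bigr].
\]
The case $\lambda=0$ is immediate since $f_0(t)=t$ and the first branch reduces to the hypothesis $A-B\geq m$.

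For $-1<\lambda<0$, I would first note that $-\lambda>0$ forces $1-\lambda A>1-\lambda B>0$ with $(1-\lambda A)-(1-\lambda B)\geq -\lambda m>0$. Applying Lemma~\ref{ess}(ii) to the strictly positive pair $1-\lambda A>1-\lambda B>0$ with scalar gap $-\lambda m$, and using the formula $\|1-\lambda B\|=1-\lambda M_B$ recorded in the paragraph preceding the lemma, gives
\[
(1-\lambda B)^{-1} - (1-\lambda A)^{-1} \;\geq\; \frac{-\lambda m}{(1-\lambda M_B)\,(1-\lambda (M_B+m))}.
\]
Dividing by $-\lambda>0$ produces a lower bound for $f_\lambda(A)-f_\lambda(B)$, and a short algebraic check verifies
\[
\frac{m}{(1-\lambda M_B)(1-\lambda(M_B+m))} \;=\; f_\lambda(M_B+m)-f_\lambda(M_B),
\]
which is the first branch.

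The case $0<\lambda<1$ proceeds symmetrically. The sign flips so that $1-\lambda B>1-\lambda A>0$ with gap $\lambda m$, while the relevant norm is now $\|1-\lambda A\|=1-\lambda m_A$. Lemma~\ref{ess}(ii) applied to this pair, followed by multiplication by $\tfrac{1}{\lambda}>0$ and the analogous identity
\[
\frac{m}{(1-\lambda m_A)(1-\lambda(m_A-m))} \;=\; f_\lambda(m_A)-f_\lambda(m_A-m),
\]
yields the second branch.

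There is no deep obstacle; the work is almost entirely bookkeeping. The two points that demand vigilance are (a) correctly identifying which of $1-\lambda A$ or $1-\lambda B$ is the larger operator, which depends on $\operatorname{sgn}(\lambda)$, and (b) correctly computing $\|1-\lambda X\|$ via $M_X$ when $\lambda<0$ and via $m_X$ when $\lambda>0$, since $t\mapsto 1-\lambda t$ is monotone in opposite directions in the two cases. These two sign-driven asymmetries are precisely what produce the dichotomy $M_B$ versus $m_A$ in the two branches of the claimed bound.
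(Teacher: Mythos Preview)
Your proof is correct and follows essentially the same route as the paper's: the partial-fraction identity for $f_\lambda$, the sign-dependent ordering of $1-\lambda A$ and $1-\lambda B$, the norm identities $\|1-\lambda B\|=1-\lambda M_B$ (for $\lambda<0$) and $\|1-\lambda A\|=1-\lambda m_A$ (for $\lambda>0$), and the application of Lemma~\ref{ess}(ii) are all exactly what the paper does. The only cosmetic difference is that the paper records the bound from Lemma~\ref{ess}(ii) in the equivalent difference-of-reciprocals form $\|1-\lambda B\|^{-1}-(\|1-\lambda B\|-\lambda m)^{-1}$ before identifying it with $f_\lambda(M_B+m)-f_\lambda(M_B)$, whereas you write it as a single fraction; the algebra is identical.
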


\begin{proof} \
First of all, we have
$$ f_0(A) - f_0(B) = A - B \ge m = f_0(M_B+m) - f_0(M_B).  $$
Next suppose that $-1 < \lambda < 0$.  Since
$$ (1- \lambda A) -  (1- \lambda B) = -\lambda (A-B) \ge -\lambda m > 0, $$
it follows from Lemma \ref{ess} that
\begin{align*} (1- \lambda B)^{-1} -  (1- \lambda A)^{-1} & \ge
   \|1- \lambda B\|^{-1} - (\|1- \lambda B\|-\lambda m)^{-1}  \\
& = (1- \lambda M_B)^{-1} - (1- \lambda M_B-\lambda m)^{-1}\,,
\end{align*}
whence
\begin{align*}
f_\lambda(A) - f_\lambda(B) &= \frac 1{-\lambda}((1- \lambda B)^{-1} -  (1- \lambda A)^{-1}) \\
& \ge \frac 1{-\lambda}((1- \lambda M_B)^{-1} - (1- \lambda (M_B+ m))^{-1}) \\
&= f_\lambda(M_B+ m) - f_\lambda(M_B).
\end{align*}
Finally suppose that $0 < \lambda < 1$.  Since
$$ (1- \lambda B) -  (1- \lambda A) = \lambda (A-B) \ge \lambda m > 0, $$
it follows from Lemma \ref{ess} that
\begin{align*} (1- \lambda A)^{-1} -  (1- \lambda B)^{-1} & \ge
   \|1- \lambda A\|^{-1} - (\|1- \lambda A\|+\lambda m)^{-1}  \\
& = (1- \lambda m_A)^{-1} - (1- \lambda m_A+\lambda m)^{-1}.
\end{align*}
Hence we have
\begin{align*}
f_\lambda(A) - f_\lambda(B) &= \frac 1{\lambda}((1- \lambda A)^{-1} -  (1- \lambda B)^{-1}) \\
& \ge \frac 1{\lambda}((1- \lambda m_A)^{-1} - (1- \lambda (m_A- m))^{-1}) \\
&= f_\lambda(m_A) - f_\lambda(m_A-m).
\end{align*}
\end{proof}

\begin{theorem}
Any non-constant operator monotone function on a finite interval $J$ is strictly operator monotone on $J$.
\end{theorem}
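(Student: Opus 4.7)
The plan is to mimic the structure of the proof of Theorem 2.3, substituting the integral representation of operator monotone functions on a bounded interval for the operator convex one, and using Lemma 3.2 in place of Lemma 2.2. By an affine change of variables we may assume $J=(-1,1)$. The Hansen--Pedersen type integral representation then expresses any operator monotone $f$ on $(-1,1)$ in the form
$$f(t)=\alpha+\int_{[-1,1]} \frac{t}{1-\lambda t}\,d\mu(\lambda)=\alpha+\int_{[-1,1]} f_\lambda(t)\,d\mu(\lambda),$$
for some $\alpha\in\mathbb{R}$ and a positive finite Borel measure $\mu$ on $[-1,1]$. Since $f$ is non-constant, the measure $\mu$ must be nonzero.

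Given $A>B$ with spectra in $(-1,1)$, set $m=\|(A-B)^{-1}\|^{-1}>0$, so that $A-B\ge m$. Lemma 3.2 (and the same argument carries over verbatim to the endpoints $\lambda=\pm 1$, where one only needs $\lambda m\ne 0$ and the fact that $m_A-m\ge m_B>-1$ and $M_B+m\le M_A<1$ keep us inside the domain) yields
$$f_\lambda(A)-f_\lambda(B)\ge c_\lambda\,I,\qquad \lambda\in[-1,1],$$
where
$$c_\lambda=\begin{cases} f_\lambda(M_B+m)-f_\lambda(M_B), & -1\le\lambda\le 0,\\ f_\lambda(m_A)-f_\lambda(m_A-m), & 0<\lambda\le 1.\end{cases}$$
Since every $f_\lambda$ is strictly increasing on its domain, $c_\lambda>0$ for each $\lambda\in[-1,1]$, and the map $\lambda\mapsto c_\lambda$ is continuous.

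Integrating against $\mu$ therefore gives
$$f(A)-f(B)=\int_{[-1,1]}\bigl(f_\lambda(A)-f_\lambda(B)\bigr)\,d\mu(\lambda)\ge\left(\int_{[-1,1]} c_\lambda\,d\mu(\lambda)\right)I.$$
Because $c_\lambda>0$ everywhere and $\mu\ne 0$, the scalar on the right-hand side is strictly positive, so $f(A)-f(B)$ is invertible, i.e., $f(A)>f(B)$, which is exactly strict operator monotonicity.

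The main delicacy is locating a clean statement of the integral representation for operator monotone functions on a bounded interval and confirming that nonlinearity (here: non-constancy) translates into $\mu\ne 0$; after that, extending Lemma 3.2 to $\lambda=\pm 1$ is routine, and the interchange of operator functional calculus with the integral against $\mu$ is justified by the uniform boundedness of $f_\lambda$ on the compact set of spectra involved.
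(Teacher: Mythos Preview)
Your proof is correct and follows essentially the same route as the paper: reduce to $J=(-1,1)$, invoke the integral representation $f(t)=\alpha+\int f_\lambda(t)\,d\mu(\lambda)$, apply Lemma~3.2 to each $f_\lambda$, and integrate the resulting scalar lower bounds. The only cosmetic difference is that the paper splits the integral at $\lambda=0$ into two pieces $g_1,g_2$ before applying Lemma~3.2, whereas you keep a single $c_\lambda$ defined piecewise; the logic is identical.
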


\begin{proof}  We may assume that $J=(-1,1)$. Put $f_{\lambda}(t)= \frac t{1-\lambda t}$ for $\lambda \in (-1,1)$.
An operator monotone function on $(-1, 1)$ is represented as
$$f(t) = f(0) + f^{\prime}(0) \int_{-1}^1 f_{\lambda}(t) d\mu(\lambda), $$
where $\mu$ is a nonzero positive measure on $(-1, 1)$. Since $f$ is nonconstant, we have   $f^{\prime}(0) > 0$.
We here decompose $f$ as $f = f(0) + g_1 + g_2$, where
$$ g_1(t) = f^{\prime}(0) \int_{-1}^0 f_{\lambda}(t) d\mu(\lambda)\quad \hbox{and}\quad
 g_2(t) = f^{\prime}(0) \int_{0}^1 f_{\lambda}(t) d\mu(\lambda).
 $$
 Then $g_1$ (resp., $g_2$) is an operator concave (resp. operator convex) increasing function.
Hence it follows from Lemma 3.2 that
\begin{align*}
 f(A) - f(B) &= f^{\prime}(0) \int_{-1}^1 (f_{\lambda}(A) - f_{\lambda}(B))d\mu(\lambda) \\
      &=  g_1(A) - g_1(B) +  g_2(A) - g_2(B)       \\
      & \ge g_1(M_B+\epsilon) - g_1(M_B) +  g_2(m_A) - g_2(m_A-\epsilon)>0.
\end{align*}
\end{proof}


\section{Furuta inequality}

First of all, we cite {\it the Furuta inequality (FI)} established in \cite{FI} for reader's convenience, see also \cite{Fujii1990}, \cite{Furuta1989}, \cite{Kamei1988} and
\cite{Tanahashi1996} for the best possibility of it. It says that if $A \ge B \ge 0$, then for each $r \ge 0$,
$$A^{\frac {p+r}q} \ge (A^{\frac r2}B^pA^{\frac r2})^{\frac 1q} $$
holds for $p \ge 0, \ q \ge 1$ with
$$ (1+r)q \ge p+r. $$

To extend Theorem B, we remark that the case $r=0$ in (FI) is just the L\"owner--Heinz inequality.
Now we introduce a constant $k(b,m,p,q,r)$ for $b,m,p,q,r \ge 0$ by
$$k(b,m,p,q,r)= (b+m)^{\frac {p+r}q-r}-b^{\frac {p+r}q-r}.     $$
As a matter of fact, we have an extension of Theorem B in the form of Furuta inequality.
We denote by $m_A= \|A^{-1}\|^{-1}$, the minimum of the spectrum of $A$.

\begin{theorem} \ Let $A$ and $B$ be invertible positive operators with $A-B\geq m>0$. Then for $0<r\leq 1$,
$$A^{\frac {p+r}q}-(A^{\frac r2}B^pA^{\frac r2})^{\frac 1q} \geq k(\|B\|,m,p,q,r){m_A}^{r}$$
holds for $p \ge 0, \ q \ge 1$ with
$ (1+r)q \ge p+r \ge qr.$
\end{theorem}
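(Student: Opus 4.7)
My plan is to reduce the target to Theorem~C by introducing the auxiliary exponent $s := (p+r)/q - r$. The constraints $(1+r)q \ge p+r \ge qr$ force $s \in [0,1]$, so $t^s$ is operator monotone on $[0,\infty)$; Theorem~C then produces
\[
A^s - B^s \ge (\|B\|+m)^s - \|B\|^s = k(\|B\|,m,p,q,r).
\]
Note also that $A \ge m_A I$ together with $r \in (0,1]$ gives $A^r \ge m_A^r I$ via L\"owner--Heinz.

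With this in hand, the decomposition $A^{(p+r)/q} = A^{s+r} = A^{r/2} A^s A^{r/2}$ reshapes the claim as
\[
A^{r/2}A^sA^{r/2} - (A^{r/2}B^pA^{r/2})^{1/q} \ge k(\|B\|,m,p,q,r)\,m_A^r.
\]
If I can establish the intermediate estimate
\[
(A^{r/2}B^pA^{r/2})^{1/q} \le A^{r/2}B^sA^{r/2}, \qquad (\ast)
\]
then subtraction yields
\[
A^{(p+r)/q} - (A^{r/2}B^pA^{r/2})^{1/q} \;\ge\; A^{r/2}(A^s - B^s)A^{r/2} \;\ge\; k\,A^r \;\ge\; k\,m_A^r,
\]
completing the argument.

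The main obstacle is $(\ast)$, which is strictly stronger than the Furuta inequality itself: Furuta only gives the weaker bound $(A^{r/2}B^pA^{r/2})^{1/q}\le A^{r/2}A^sA^{r/2}$, with $A^s$ rather than $B^s$ in the middle. To pull the central exponent down from $A^s$ to $B^s$ I would combine the Furuta inequality with the L\"owner--Heinz inequality $A^r \ge B^r$ (valid since $r \in (0,1]$) and exploit the identity $p = q s + r(q-1)$. Via L\"owner--Heinz applied to $t^{1/q}$, $(\ast)$ is equivalent to the operator inequality
\[
A^{r/2}B^pA^{r/2} \le (A^{r/2}B^sA^{r/2})^q,
\]
whose verification for general real $q \ge 1$ is delicate because $X \mapsto X^q$ is not operator monotone for $q > 1$; a careful Furuta-type argument rather than a direct L\"owner--Heinz induction is required.
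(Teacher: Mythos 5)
Your reduction is set up correctly: with $s=\frac{p+r}{q}-r\in[0,1]$, Theorem~C gives $A^s-B^s\ge k(\|B\|,m,p,q,r)$, and once the intermediate inequality $(\ast)$: $(A^{r/2}B^pA^{r/2})^{1/q}\le A^{r/2}B^sA^{r/2}$ is in hand, the chain $A^{(p+r)/q}-(A^{r/2}B^pA^{r/2})^{1/q}\ge A^{r/2}(A^s-B^s)A^{r/2}\ge kA^r\ge k\,m_A^r$ finishes the proof exactly as in the paper. The problem is that $(\ast)$ is the entire content of the argument, and you leave it unproved: you correctly observe that it is stronger than the Furuta inequality (which only yields $A^s$ in the middle slot), note that $X\mapsto X^q$ is not operator monotone, and then defer to ``a careful Furuta-type argument.'' That is a genuine gap, not a routine verification. (Also, your claim that $(\ast)$ is \emph{equivalent} to $A^{r/2}B^pA^{r/2}\le(A^{r/2}B^sA^{r/2})^q$ is off: L\"owner--Heinz gives only the implication from the latter to the former, and in any case neither is established.)

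The missing idea is the symmetry identity $(XX^*)^{\alpha}=X(X^*X)^{\alpha-1}X^*$ applied with $X=A^{r/2}B^{p/2}$ and $\alpha=\tfrac1q$, which gives
\[
(A^{r/2}B^pA^{r/2})^{1/q}=A^{r/2}B^{p/2}\bigl(B^{p/2}A^{r}B^{p/2}\bigr)^{\frac1q-1}B^{p/2}A^{r/2}
=A^{r/2}B^{p/2}\bigl(B^{-p/2}A^{-r}B^{-p/2}\bigr)^{1-\frac1q}B^{p/2}A^{r/2}.
\]
Since $0<r\le1$ and $A\ge B>0$, L\"owner--Heinz gives $A^{r}\ge B^{r}$, hence $A^{-r}\le B^{-r}$ and $B^{-p/2}A^{-r}B^{-p/2}\le B^{-(p+r)}$; as $1-\tfrac1q\in[0,1)$, a second application of L\"owner--Heinz yields $\bigl(B^{-p/2}A^{-r}B^{-p/2}\bigr)^{1-\frac1q}\le B^{-(p+r)(1-\frac1q)}$, and sandwiching by $B^{p/2}$ and $A^{r/2}$ produces exactly $(\ast)$, since $p-(p+r)(1-\tfrac1q)=s$. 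This two-step L\"owner--Heinz argument on the inverted middle term is what replaces the ``delicate Furuta-type argument'' you invoke; without it your proposal reduces the theorem to an unproven statement that is at least as hard as the theorem itself.
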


\vskip.2cm
\begin{proof} \ We note that $q \ge 1$ and $ (1+r)q \ge p+r \ge qr$ assure that the exponent $\frac {p+r}q-r$ in the constant $k(b,m,p,q,r)$ belongs to $[0,1]$. Since $0 \le r \leq 1,$ it follows from Theorem B that
\begin{align*}
A^{\frac {p+r}q}-(A^{\frac r2}B^pA^{\frac r2})^{\frac 1q} &=A^{\frac {p+r}q}-A^{\frac r2}B^{\frac p2}(B^{\frac p2}A^r B^{\frac p2})^{\frac 1q -1}B^{\frac p2}A^{\frac r2} \\
     &=A^{\frac {p+r}q}-A^{\frac r2}B^{\frac p2}(B^{\frac {-p}2}A^{-r} B^{\frac {-p}2})^{1- \frac 1q}B^{\frac p2}A^{\frac r2} \\
     &\geq A^{\frac {p+r}q}-A^{\frac r2}B^{\frac p2}(B^{-\frac p2}B^{-r} B^{-\frac p2})^{1-\frac 1q}B^{\frac p2}A^{\frac r2} \\
     &=A^{\frac {p+r}q}-A^{\frac r2}B^{p-(p+r)(1-\frac 1q)}A^{\frac r2} \\
     &=A^{\frac r2}(A^{\frac {p+r}q-r}-B^{\frac {p+r}q-r})A^{\frac r2}  \\
     &\ge k(\|B\|,m,p,q,r)A^r \\
     &\ge k(\|B\|,m,p,q,r) {m_A}^{r}.
    \end{align*}
\end{proof}

Next we cosider the optimal case $q=\frac {1+r}{p+r}$ with $p \ge 1$, which is the most important in the Furuta inequality, by virtue of the L\"owner--Heinz inequality.
The proof of the following theorem is as same as that of the Furuta inequality.

\begin{theorem}  \
 Let $A$ and $B$ be invertible positive operators with $A-B\geq m>0$.
Then
$$A^{1+r}-(A^{\frac r2}B^pA^{\frac r2})^{\frac {1+r}{p+r}}
\geq m {m_A}^{r}$$
holds for $p \ge 1$ and $r \ge 0$.
\end{theorem}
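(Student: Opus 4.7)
The plan is to follow the Furuta inequality proof template, exploiting the fact that in the optimal case $q=(p+r)/(1+r)$ the residual exponent $\frac{p+r}{q}-r$ collapses to $1$. This means the Furuta-style manipulation produces the gap $A-B$ in its final form, rather than $A^{\alpha}-B^{\alpha}$ for some $\alpha<1$, which is precisely what allows the clean lower bound $m\,m_A^r$.

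First, I apply the standard identity $(XX^*)^{\alpha}=X(X^*X)^{\alpha-1}X^*$ with $X=A^{r/2}B^{p/2}$ and $\alpha=(1+r)/(p+r)$ to obtain
\[
(A^{r/2}B^{p}A^{r/2})^{\frac{1+r}{p+r}}=A^{r/2}B^{p/2}\bigl(B^{p/2}A^{r}B^{p/2}\bigr)^{-\frac{p-1}{p+r}}B^{p/2}A^{r/2}.
\]
The heart of the argument is then the intermediate estimate
\[
(A^{r/2}B^{p}A^{r/2})^{\frac{1+r}{p+r}}\leq A^{r/2}BA^{r/2},
\]
which, after inserting the above identity, is equivalent to $(B^{p/2}A^{r}B^{p/2})^{(p-1)/(p+r)}\geq B^{p-1}$. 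This last inequality is itself an instance of the Furuta inequality (in its companion ``$B$-centered'' form $(B^{r/2}A^{p}B^{r/2})^{1/q}\geq B^{(p+r)/q}$) applied to $A\geq B\geq 0$ with $q=(r+p)/(p-1)$; the Furuta constraints $q\geq 1$ and $(1+p)q\geq r+p$ reduce respectively to $r\geq -1$ and $2\geq 0$ and are therefore satisfied for every $p>1$ and $r\geq 0$. For $p=1$ both sides collapse to $I$ and the bound is trivial.

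Combining the identity with the intermediate bound yields
\[
A^{1+r}-(A^{r/2}B^{p}A^{r/2})^{\frac{1+r}{p+r}}\geq A^{r/2}(A-B)A^{r/2}\geq m\,A^{r}\geq m\,m_A^{r},
\]
invoking $A-B\geq m$ and $A\geq m_A I$, whence $A^r\geq m_A^r I$ by the functional calculus. The main obstacle is recognizing that the required inner bound $(B^{p/2}A^{r}B^{p/2})^{(p-1)/(p+r)}\geq B^{p-1}$, which closes the entire argument in one step for all admissible $p,r$, is itself a Furuta-inequality instance; the $r\leq 1$ restriction of Theorem 4.1 arose precisely because there the corresponding bound was handled by Löwner--Heinz. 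Once the Furuta input is spotted, the rest of the proof is routine bookkeeping paralleling Theorem 4.1, with the simplification afforded by the optimal choice of $q$.
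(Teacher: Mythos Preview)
Your proof is correct and takes a genuinely different route from the paper's. The paper proceeds by the classical Furuta-style doubling iteration: it first invokes Theorem~4.1 (whose proof needs $r\le 1$ because the step $A^{-r}\le B^{-r}$ relies on L\"owner--Heinz) to cover $r\in[0,1]$, then sets $A_1=A^2$, $B_1=(A^{1/2}B^pA^{1/2})^{2/(p+1)}$ and reapplies the $[0,1]$ case to reach $r\in[1,3]$, then $[3,7]$, and so on. You instead observe that the required inner bound $(B^{p/2}A^{r}B^{p/2})^{(p-1)/(p+r)}\ge B^{p-1}$ is itself an instance of the already-established Furuta inequality in its $B$-centered form, valid for \emph{all} $r\ge 0$ at once, so the iteration is unnecessary. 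Your argument is shorter and more direct, at the price of invoking the full Furuta inequality as a black box; the paper's argument is more self-contained, needing only L\"owner--Heinz plus the doubling trick---which, as the paper itself notes, is essentially re-running the proof of the Furuta inequality with the extra $m\,m_A^{r}$ bookkeeping carried along.
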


\begin{proof} \
The conclusion for $r \in [0,1]$ is ensured by Theorem 4.1 because $k(b,m,p,\frac {p+r}{1+r}, r) = m$.
In particular, taking $r=1$, we have
$$A^2-(A^{\frac 12}B^pA^{\frac 12})^{\frac 2{p+1}} \ge m m_A:=m_1.
$$
Since $A_1=A^2$ and $B_1=(A^{\frac 12}B^pA^{\frac 12})^{\frac 2{p+1}}$ satisfy $A_1 - B_1 \ge m_1>0$, we have, for an arbitrary $s \in [0.1]$,
$$
A_1^{1+s} - (A_1^{\frac {1+s}2}B_1^pA_1^{\frac {1+s}2})^{\frac {1+s}{p_1+s}} \ge m_1,\ \text{where} \ p_1=\frac {p+1}2.
$$
Namely it is proved that if $A-B\ge m>0$, then
\begin{equation}  
A^{2(1+s)} - (A^{\frac {1+2s}2}B^pA^{\frac {1+2s}2})^{\frac {2(1+s)}{p+1+2s}} \ge m_1 {m_{A_1}}^s= m{m_A}^{1+2s},
\end{equation}
that is, putting $r=1+2s$, the conclusion holds for $r \in [1,3]$.

For the next step, if we put $s=1$ in (2) and $A_2=A^4$, $B_2= (A^{\frac 32}B^pA^{\frac 32})^{p_2}$, where $p_2 = \frac {p+3}4$, then $A_2 - B_2 \ge m {m_A}^3:=m_2$ holds and for an arbitrary $s \in [0,1]$,
$$
A_2^{1+s} - (A_2^{\frac {1+s}2}B_2^pA_2^{\frac {1+s}2})^{\frac {1+s}{p_2+s}} \ge m_2 {m_{A_2}}^s.
$$
Hence, putting $r=3+4s$, we obtain that the conclusion holds for $r \in [3,7]$.
By repeating this method, we can prove the theorem.
\end{proof}

\begin{corollary}  \
Let $A$ and $B$ be invertible positive operators with $A-B\geq m>0$.
Then for each $r \ge0$
$$A^{\frac {p+r}q}-(A^{\frac r2}B^pA^{\frac r2})^{\frac 1q}
\geq \|A\|^{\frac {p+r}q} - (\|A\|^{1+r}- m{m_A}^r)^{\frac {p+r}{q(1+r)}}
$$
holds for $p \ge 0$ and $q \ge 1$ with
$ (1+r)q \ge p+r$.
\end{corollary}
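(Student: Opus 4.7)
The plan is to deduce this corollary from Theorem 4.2 by applying L\"owner--Heinz to raise the resulting operator inequality to a fractional power, and then converting the remaining operator inequality, which involves only functions of $A$, into a scalar one via spectral calculus.

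First, Theorem 4.2 rewritten supplies
$$(A^{\tfrac r2}B^pA^{\tfrac r2})^{(1+r)/(p+r)} \le A^{1+r} - m\, m_A^r.$$
Both sides are positive: $A-B \ge m > 0$ together with $B \ge 0$ forces $A \ge m I$, hence $m_A \ge m$ and $A^{1+r} \ge m_A^{1+r} \ge m\, m_A^r$. Setting $s := \tfrac{p+r}{q(1+r)}$, the hypothesis $(1+r)q \ge p+r$ gives $s \in [0,1]$, so L\"owner--Heinz applied with exponent $s$ yields
$$(A^{\tfrac r2}B^pA^{\tfrac r2})^{1/q} \le (A^{1+r} - m\,m_A^r)^s,$$
using the identity $s(1+r)/(p+r) = 1/q$.

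Next, subtracting this from $A^{(p+r)/q}$ reduces the task to bounding from below the real function
$$g(t) \;:=\; t^{s(1+r)} - (t^{1+r} - m\, m_A^r)^s$$
on ${\rm sp}(A) \subseteq [m_A, \|A\|]$. With the substitution $u = t^{1+r}$ and $c := m\, m_A^r$, this reduces to studying $h(u) = u^s - (u-c)^s$ on $u \ge m_A^{1+r} \ge c$. A one-line derivative check gives $h'(u) = s[u^{s-1} - (u-c)^{s-1}] \le 0$ for $s \in [0,1]$, so $h$, and hence $g$, is non-increasing; the minimum of $g$ over $[m_A,\|A\|]$ is thus attained at $t = \|A\|$. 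Spectral calculus then delivers $g(A) \ge g(\|A\|) = \|A\|^{(p+r)/q} - (\|A\|^{1+r} - m\, m_A^r)^s$, which is precisely the stated inequality.

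I do not anticipate any real obstacle. The argument is essentially exponent bookkeeping on top of two inputs (Theorem 4.2 and L\"owner--Heinz) together with one elementary monotonicity calculation. The only step that warrants a moment's care is the final spectral translation from the scalar inequality $g(t) \ge g(\|A\|)$ valid on ${\rm sp}(A)$ to the operator inequality $g(A) \ge g(\|A\|)\, I$, which rests on the standard observation that $\min {\rm sp}(g(A)) = \min_{t \in {\rm sp}(A)} g(t)$.
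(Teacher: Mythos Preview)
Your proof is correct and follows essentially the same route as the paper: invoke Theorem~4.2 to obtain $A^{1+r}-(A^{r/2}B^pA^{r/2})^{(1+r)/(p+r)}\ge m\,m_A^r$, then pass to the exponent $\alpha=\frac{p+r}{q(1+r)}\in[0,1]$. The only cosmetic difference is that the paper carries out this second step by quoting Theorem~A directly (applied to $A_1=A^{1+r}$ and $B_1=(A^{r/2}B^pA^{r/2})^{(1+r)/(p+r)}$), whereas you reprove Theorem~A inline via L\"owner--Heinz plus the scalar monotonicity of $u\mapsto u^s-(u-c)^s$.
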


\begin{proof}
Since $\alpha = \frac {p+r}{q(1+r)} \le 1$, we apply Theorem A to $A_1$ and $A_2$ such that
$$A_1=A^{1+r}> B_1 = (A^{\frac r2}B^pA^{\frac r2})^{\frac {1+r}{p+r}}$$
by Theorem 4.2.
Then we have
$$  A_1^\alpha - B_1^\alpha \ge \|A_1\|^\alpha - (\|A_1\|-m{m_A}^r)^\alpha.
$$
So we get the desired lower bound.
\end{proof}


\begin{thebibliography}{99}

\bibitem{Fujii1990} M. Fujii, \textit{Furuta's inequality and its mean theoretic approach}, J. Operator Theory 23 (1990), 67--72.

\bibitem{FKN} M. Fujii, Y.O. Kim and R. Nakamoto, \textit{A characterization of convex functions and its application to operator monotone functions}, Banach J. Math. Anal. 8 (2014), 118--123.

\bibitem{FI} T. Furuta, \textit{$A \geq B \geq 0$ assures $(B^rA^pB^r)^{1/q} \geq B^{(p+2r)/q}$ for $r \geq 0,\ p \geq 0,\ q \geq 1$ with
$(1 + 2r)q \geq p + 2r$}, Proc. Amer. Math. Soc. 101 (1987), 85--88.

\bibitem{Furuta1989} T. Furuta, \textit{Elementary proof of an order preserving inequality}, Proc. Japan Acad. 65 (1989), 126.

\bibitem{BJMA} T. Furuta, \textit{Comprehensive survey on an order preserving operator inequality}, Banach J. Math. Anal. 7 (2013), no. 1, 14--40.

\bibitem{HansenPedersen1982} F. Hansen and G. K. Pedersen, \textit{Jensen's inequality for operators and L\"owner's theorem}, Math. Ann. 258 (1982), 229--241.

\bibitem{HEI}
E. Heinz, \textit{Beitr\"age zur St\"orungstheorie der
Spektralzerlegung}, Math. Ann. \textit{123} (1951), 415--438.

\bibitem{Kamei1988} E. Kamei, \textit{A satellite to Furuta's inequality}, Math. Japon. 33 (1988), 883--886.

\bibitem{KWO}
M.K. Kwong, \textit{Inequalities for the powers of nonnegative
Hermitian operators}, Proc. Amer. Math. Soc. 51 (1975),
401・406.

\bibitem{BookMPF} D.S. Mitrinovi\'{c}, J.E. Pe\v{c}ari\'{c} and A.M. Fink, \textit{Classical and New Inequalities in Analysis}, Kluwer Academic Publishers, Dordrecht/Boston/London, 1993.

\bibitem{MN} M.S. Moslehian and H. Najafi, \textit{An extension of the L\"owner--Heinz inequality}, Linear Algebra Appl. 437 (2012), 2359--2365.

\bibitem{Tanahashi1996} K. Tanahashi, \textit{Best possibility of the Furuta inequality}, Proc. Amer. Math.
Soc. 124 (1996), 141--146.

\bibitem{U2000} M. Uchiyama, \textit{Strong monotonicity of operator functions}, Integral Equations Operator Theory 37 (2000), 95--105.

\end{thebibliography}
\end{document}